\documentclass{article}
\usepackage{amsmath}
\usepackage{amssymb}
\usepackage{amsthm}
\usepackage[mathscr]{eucal}
\usepackage[all,knot,cmtip]{xy}
\xyoption{arc}
\usepackage{soul}
\usepackage{graphicx}
\usepackage{color}
\usepackage{cmll}

\numberwithin{equation}{section}
\newtheorem{thm}{Theorem}[section]

\newtheorem{prop}[thm]{Proposition}

\theoremstyle{definition}
\newtheorem{definition}[thm]{Definition}
\newtheorem{rem}[thm]{Remark}

\makeatletter
\newcommand{\prc}{\mathbin{\mathpalette\prc@inner\relax}}
\newcommand{\prc@inner}[2]{%
  \vbox{\offinterlineskip\m@th
    \ialign{%
      ##\cr
      \hidewidth\raisebox{-1.5\height}[0pt][0pt]{$#1.$}\hidewidth\cr
      $#1-$\cr
    }%
  }%
}
\makeatother

\begin{document}
\title{On the sphere spectrum from the viewpoint of linear logic}
\author{Ryo Horiuchi}
\date{}

\maketitle

\begin{abstract}
In this note, we show the category of $\Gamma$-sets with the sphere spectrum, the smash product, and the substitution product gives rise to an isomix linearly distributive category, which is a categorical semantics of the multiplicative fragment of linear logic.
We also show Lydakis' assembly map between these two monoidal products corresponds to the mixor.

\end{abstract}

\section{Introduction}
In \cite{Segal}, Segal introduced $\Gamma$-spaces, which were later shown by Bousfield and Friedlander \cite{BF} to provide a model for connective spectra.
In \cite{Lydakis}, Lydakis constructed a monoidal structure on the category of $\Gamma$-spaces corresponding to the smash product of (connective) spectra.
Based on Lydakis' work, Connes and Consani studied $\Gamma$-sets, the discrete analogue of $\Gamma$-spaces, equipped with the corresponding discrete analogue of the monoidal product, as a framework for algebras over the field with one element in \cite{CC}.

Lydakis also studied another monoidal product on $\Gamma$-spaces, which arises from the monad structure on pointed spaces, and constructed a morphism, called the assembly map, relating these two monoidal products \cite{Lydakis}.
Although the assembly map is not an isomorphism on the nose, it is shown there that the assembly map is an equivalence from the viewpoint of stable homotopy theory.
Moreover, both monoidal products correspond to the smash product of (connective) spectra.
In \cite{CC}, Connes and Consani used the assembly map in the setting of $\Gamma$-sets to investigate the relationship between their approach to the field with one element and Durov's approach \cite{Durov}.

On the other hand, Cockett and Seely introduced linearly distributive categories in \cite{CS}.
They are categories equipped with two monoidal structures satisfying certain compatibility conditions and serve as a categorical semantics for the multiplicative fragment of linear logic proposed by Girard in \cite{Girard}.
Roughly speaking, the two monoidal structures correspond to the {\it times} connective and the {\it par} connective of linear logic respectively, while the compatibility conditions encode the interaction between them.

In this paper, we show that the category of $\Gamma$-sets, equipped with these two monoidal structures and the sphere spectrum as their common unit, admits the structure of a linearly distributive category.
We further show that this linearly distributive category is an isomix linearly distributive category, where the mixor is given by the assembly map.

\section{$\Gamma$-sets and their monoidal products}
In this section, we recall some necessary notions about $\Gamma$-sets from \cite{CC}, \cite{DGM}, and \cite{Lydakis}.
\subsection{$\Gamma \mathchar`-\mathbf{Set}$}

\begin{definition}We let $\Gamma^{op}$ denote the category whose objects are the pointed set $n_+:=\{0, 1, \cdots, n\}$ for every non-negative integer $n$, where $0$ is the base point, and whose morphisms are pointed maps.

A $\Gamma$-set $X$ is a point preserving functor $X:\Gamma^{op}\to\mathbf{Set}_*$, where $\mathbf{Set}_*$ denotes the category of pointed sets and pointed maps.
We let $\Gamma \mathchar`-\mathbf{Set}$ denote the category of $\Gamma$-sets and natural transformations.
\end{definition}

\begin{definition}For any $n_+\in\Gamma^{op}$, we let $\Gamma^n$ denote the associated $\Gamma$-set given by, for each $k_+\in\Gamma^{op}$, 
\[\Gamma^n(k_+)=\Gamma^{op}(n_+, k_+),\]
the pointed set of morphisms from $n_+$ to $k_+$.
\end{definition}

\begin{definition}The sphere spectrum $\mathbb{S}$ is the inclusion $\Gamma^{op}\to\mathbf{Set}_*$.
\end{definition}

\begin{rem}The $\Gamma$-set $\mathbb{S}$ corresponds to the sphere spectrum in the usual sense \cite[Section 2]{DGM}.

Note that there is an isomorphism $\mathbb{S}\to\Gamma^{1}$ given by
\[k_+\to\Gamma^{op}(1_+, k_+), \quad i\mapsto (1\mapsto i),\]
for each $k_+\in\Gamma^{op}$.
\end{rem}

Let $\mathbb{N}\mathchar`-\mathbf{Mod}$ denote the category of commutative monoids and homomorphisms.
We can associate a $\Gamma$-set to a commutative monoid.

\begin{definition}[\cite{DGM}]Let $A\in\mathbb{N}\mathchar`-\mathbf{Mod}$.
We define $HA\in\Gamma \mathchar`-\mathbf{Set}$ as follows.

For any $n_+\in\Gamma^{op}$, $HA(n_+)=A^{\times n}\in\mathbf{Set}_*$, where the base point is $(0, \cdots, 0)$.
For any $f:m_+\to n_+\in\Gamma^{op}$, 
\[HA(f):A^{\times m}\to A^{\times n}, \quad (a_1, \cdots, a_m)\mapsto (\sum_{j\in f^{-1}(1)}a_j, \cdots, \sum_{j\in f^{-1}(n)}a_j),\]
where the sum over the empty set is understood to be $0$.

This gives rise to a functor $H:\mathbb{N}\mathchar`-\mathbf{Mod}\to\Gamma \mathchar`-\mathbf{Set}$, called the Eilenberg-Mac Lane functor.
\end{definition}

\begin{rem}
For any commutative group $A$, $HA$ corresponds to the Eilenberg-Mac Lane spectrum \cite[Section 2]{DGM}. 

The combinatorics of pointed finite sets recall the additive structure of any $A\in\mathbb{N}\mathchar`-\mathbf{Mod}$ in the following sense.
Consider the map $\alpha:2_+\to 1_+\in\Gamma^{op}$ defined by $\alpha(0)=0$ and $\alpha(1)=\alpha(2)=1$.
This induces the map 
\[HA(\alpha):A\times A\to A, (a_1, a_2)\mapsto a_1+a_2\]
by definition.
By using this, it is shown that $H:\mathbb{N}\mathchar`-\mathbf{Mod}\to\Gamma \mathchar`-\mathbf{Set}$ is fully faithful in \cite{CC}.
\end{rem}

\subsection{Smash and substitution}
We recall two monoidal structures on $\Gamma$-sets, which are similar to the products that play an important role in the study of finite algebraic theories \cite{GF}.
\begin{definition}[\cite{Lydakis, CC}]Let $X, Y\in\Gamma \mathchar`-\mathbf{Set}$. 
The smash product $X\wedge Y\in\Gamma \mathchar`-\mathbf{Set}$ is the left Kan extension of the following diagram:

\[
\xymatrix{
\Gamma^{op}&&\\
\Gamma^{op}\times\Gamma^{op} \ar[u]^{\wedge} \ar[rr]_{X(\mathchar`-)\wedge Y(\mathchar`-)} &&\mathbf{Set}_*,
}\]
where $\wedge:\Gamma^{op}\times\Gamma^{op}\to\Gamma^{op}$ denotes the smash product of pointed finite sets and $X(\mathchar`-)\wedge Y(\mathchar`-)$ denotes the point-wise smash product of pointed sets by abusing notation 

\end{definition}

\begin{rem}The smash product $m_{+}\wedge n_{+}$ of pointed finite sets is, by definition, isomorphic to $(mn)_{+}$.
Thus, it would be natural to regard the smash product of $\Gamma$-sets as a generalization of the multiplication of natural numbers.

There is an isomorphism $\Gamma^{mn}\to\Gamma^{m}\wedge\Gamma^{n}$ given by
\[\Gamma^{mn}(k_+)\to(\Gamma^{m}\wedge\Gamma^{n})(k_+), \quad (f:m_{+}\wedge n_{+}\to k_+)\mapsto [f, \operatorname{id}_{m_+}\wedge\operatorname{id}_{n_+}].\]
Its inverse is given by $[g, \phi\wedge\varphi]\mapsto g\circ(\phi\wedge\varphi)$.
In particular, we have $\Gamma^{n}\cong\Gamma^{n}\wedge\Gamma^{1}$.

Since each $X\in\Gamma \mathchar`-\mathbf{Set}$ is canonically isomorphic to a colimit of $\Gamma^n$'s, and the smash product commutes with colimits, we have $X\cong X\wedge\mathbb{S}$.
See \cite{Lydakis} for more detail.
\end{rem}

\begin{prop}[\cite{CC}]
 $(\Gamma \mathchar`-\mathbf{Set}, \wedge, \mathbb{S})$ gives rise to a symmetric monoidal category and the Eilenberg-Mac Lane functor 
\[H:(\mathbb{N}\mathchar`-\mathbf{Mod}, \otimes_{\mathbb{N}}, \mathbb{N})\to(\Gamma \mathchar`-\mathbf{Set}, \wedge, \mathbb{S})\]
is a lax monoidal functor.
Moreover, the induced functor on the categories of monoid objects is fully faithful.
\end{prop}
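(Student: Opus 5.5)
The plan has three stages: first the symmetric monoidal structure, then the lax monoidal structure of $H$, then full faithfulness on monoid objects.

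\emph{Symmetric monoidal structure.} I would treat the smash product as Day convolution on the category of pointed functors $\Gamma^{op}\to\mathbf{Set}_*$. The input is the symmetric monoidal structure $(\Gamma^{op},\wedge,1_+)$ on pointed finite sets. Since $\mathbf{Set}_*$ is closed symmetric monoidal under $\wedge$ and is cocomplete, the general Day convolution theorem gives a symmetric monoidal structure with unit the representable $\Gamma^{op}(1_+,-)=\Gamma^1\cong\mathbb{S}$, by the earlier Remark. The steps are:
\begin{enumerate}
\item Write the convolution as a coend,
\[(X\wedge Y)(k_+)\cong\int^{m_+,n_+}\Gamma^{op}(m_+\wedge n_+,k_+)\wedge X(m_+)\wedge Y(n_+).\]
\item Observe that $X\wedge Y$ preserves colimits in each variable.
\item Obtain the associator and symmetry from those of $\wedge$ on $\Gamma^{op}$, using the co-Yoneda lemma. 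Coherence then reduces to coherence in $\Gamma^{op}$ and in $\mathbf{Set}_*$.
\item Get the unitors from $X\cong X\wedge\mathbb{S}$, as in the Remark, via $\Gamma^{mn}\cong\Gamma^m\wedge\Gamma^n$ and density.
\end{enumerate}

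\emph{Lax monoidal structure on $H$.} The unit map $\mathbb{S}\to H\mathbb{N}$ corresponds, under Yoneda, to the element $1\in H\mathbb{N}(1_+)=\mathbb{N}$. For the multiplication, by the universal property of the left Kan extension, a map $HA\wedge HB\to H(A\otimes_{\mathbb{N}}B)$ is the same as a natural family
\[A^{m}\wedge B^{n}\to (A\otimes B)^{mn},\qquad ((a_i),(b_j))\mapsto (a_i\otimes b_j)_{(i,j)}.\]
Two checks are needed:
\begin{enumerate}
\item This family is well defined on the smash, because it is zero whenever some coordinate tuple is zero.
\item It is natural in $f:m_+\to m'_+$ and $g:n_+\to n'_+$. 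This follows from bilinearity of $\otimes$: for $f\wedge g$ one has $\sum_{f(i)=p,\,g(j)=q}a_i\otimes b_j=\bigl(\sum a_i\bigr)\otimes\bigl(\sum b_j\bigr)$.
\end{enumerate}
The associativity, unit and symmetry axioms can then be checked on generators $a\otimes b\otimes c$, where they are immediate.

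\emph{Full faithfulness on monoid objects.} $H$ is already fully faithful by the result of \cite{CC} recalled earlier, so the induced functor on monoids is faithful. For fullness, I would first show that the lax structure map
\[HA\wedge HB\to H(A\otimes B)\]
is essentially surjective enough in level $1_+$. The precise claim is that $\operatorname{Hom}(HA\wedge HB,HC)$ is in bijection with bilinear maps $A\times B\to C$. To see this, use the Kan extension universal property: such a map is a natural family $A^m\wedge B^n\to C^{mn}$. Naturality with respect to the projections and to the fold maps $\alpha$ forces this family to be determined by its $(1,1)$ component $A\wedge B\to C$, and forces that component to be biadditive and to preserve $0$. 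This step is the main obstacle; I would handle it as a careful analogue of the \cite{CC} argument, testing against inclusions $1_+\to m_+$ and against $\alpha$.

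Given this bijection, the rest is formal. A morphism of monoids $HR\to HS$ comes from a homomorphism $\phi:R\to S$ by full faithfulness of $H$. Compatibility with the multiplications $HR\wedge HR\to HR$, read through the bijection with bilinear maps, says $\phi(rr')=\phi(r)\phi(r')$. Compatibility with the units says $\phi(1)=1$. Hence $\phi$ is a semiring map, which proves fullness.
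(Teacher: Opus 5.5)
Your proposal is correct. The paper gives no argument of its own for this proposition and simply defers to \cite{CC}. Your outline is the standard argument behind that citation: Day convolution over $(\Gamma^{op},\wedge,1_+)$, the explicit family $(a_i)\wedge(b_j)\mapsto(a_i\otimes b_j)_{(i,j)}$, and reading multiplicativity and unitality off the $(1_+,1_+)$- and $1_+$-components.

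The step you flag as the main obstacle is not needed. For fullness you only use that the equal maps $H\phi\circ\mu_R$ and $\mu_S\circ(H\phi\wedge H\phi)$ have equal $(1_+,1_+)$-components $R\wedge R\to S$, and that equality is literally $\phi(rr')=\phi(r)\phi(r')$. The full bijection with bilinear maps is therefore surplus. It is true, and it is proved as you say via the projections $m_+\to 1_+$ and $\alpha$; the inclusions $1_+\to m_+$ alone would not pin the family down.
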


\begin{rem}In this sense, the smash product of $\Gamma$-sets is a generalization of the tensor product of commutative monoids.
From this perspective, \cite{CC} studies monoid objects in the monoidal category $(\Gamma \mathchar`-\mathbf{Set}, \wedge, \mathbb{S})$ as a generalization of rigs.
\end{rem}

\begin{definition}Let $X, Y\in\Gamma \mathchar`-\mathbf{Set}$. 
The substitution product $X\circ Y\in\Gamma \mathchar`-\mathbf{Set}$ is the composition $\tilde{X}\circ Y\in\Gamma \mathchar`-\mathbf{Set}$, where $\tilde{X}:\mathbf{Set}_*\to\mathbf{Set}_*$ is the left Kan extension of the following diagram:

\[
\xymatrix{
\mathbf{Set}_*&&\\
\Gamma^{op} \ar@{^{(}->}[u]^{\mathbb{S}} \ar[rr]_{X} &&\mathbf{Set}_*.
}\]

\end{definition}

\begin{rem}By the universality of the left Kan extension, $\tilde{\mathbb{S}}$ is canonically isomorphic to the identity functor $\operatorname{Id}_{\mathbf{Set}_*}$, which is trivially a monad.

In the same way as for the smash product of $\Gamma$-sets, we define 
\[\tilde{X}\wedge\tilde{Y}:\mathbf{Set}_*\to\mathbf{Set}_*.\]
Note that $\tilde{X}\wedge\tilde{Y}$ is canonically isomorphic to $\widetilde{X\wedge Y}$.
\end{rem}

\begin{prop}[\cite{Lydakis, CC}] $(\Gamma \mathchar`-\mathbf{Set}, \circ, \mathbb{S})$ gives rise to a monoidal category.
\end{prop}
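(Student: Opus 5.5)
The plan is to transport the monoidal structure from endofunctors of $\mathbf{Set}_*$, where composition is strictly associative, back to $\Gamma$-sets. First I will establish that the assignment $X\mapsto\tilde{X}$ is a fully faithful functor
\[
\Gamma\mathchar`-\mathbf{Set}\to\operatorname{End}(\mathbf{Set}_*),
\]
whose image consists of the functors preserving filtered colimits (finitary functors). Fullness and faithfulness follow from the universal property of the left Kan extension along the fully faithful inclusion $\mathbb{S}:\Gamma^{op}\hookrightarrow\mathbf{Set}_*$. Because the inclusion is fully faithful, the unit $X\to\tilde{X}\circ\mathbb{S}$ is an isomorphism, so the restriction $F\mapsto F\circ\mathbb{S}$ is a left inverse up to isomorphism. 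Natural transformations $\tilde{X}\Rightarrow\tilde{Y}$ then correspond to natural transformations $X\Rightarrow\tilde{Y}\circ\mathbb{S}\cong Y$.

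The key step is to show that this embedding turns $\circ$ into composition of endofunctors, that is,
\[
\widetilde{X\circ Y}\cong\tilde{X}\circ\tilde{Y}.
\]
Every pointed set is the filtered colimit of its finite pointed subsets, and each of these is isomorphic to some $n_+$. The functor $\tilde{Y}$ preserves filtered colimits, being computed by a coend that commutes with colimits in the variable. Hence $\tilde{X}\circ\tilde{Y}$ also preserves filtered colimits, and it agrees with $\tilde{X}\circ Y$ on $\Gamma^{op}$. Finitary functors are determined by their restriction to finite pointed sets, i.e. they are the left Kan extensions of those restrictions, so the isomorphism follows.

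Given this, I would define the associator
\[
(X\circ Y)\circ Z\cong X\circ(Y\circ Z)
\]
as the restriction along $\mathbb{S}$ of the chain
\[
\widetilde{X\circ Y}\circ\tilde{Z}\cong(\tilde{X}\circ\tilde{Y})\circ\tilde{Z}=\tilde{X}\circ(\tilde{Y}\circ\tilde{Z})\cong\tilde{X}\circ\widetilde{Y\circ Z}.
\]
For the unitors I would use the isomorphism $\tilde{\mathbb{S}}\cong\operatorname{Id}$ from the preceding remark. This gives $\mathbb{S}\circ X=\tilde{\mathbb{S}}\circ X\cong X$, while $X\circ\mathbb{S}=\tilde{X}\circ\mathbb{S}\cong X$ is the Kan extension unit isomorphism.

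The pentagon and triangle axioms then reduce to coherence in the strict monoidal category $(\operatorname{End}(\mathbf{Set}_*),\circ,\operatorname{Id})$. This uses the fact that the comparison isomorphisms $\widetilde{X\circ Y}\cong\tilde{X}\tilde{Y}$ are canonical, being induced by universal properties, and so are compatible with one another. In other words, $\tilde{(-)}$ becomes a strong monoidal embedding, and a monoidal structure can be pulled back along a fully faithful functor whose essential image is closed under the operations.

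The main obstacle is the identification $\widetilde{X\circ Y}\cong\tilde{X}\circ\tilde{Y}$. It requires care that $\tilde{Y}$ preserves filtered colimits and that the relevant canonical maps are indeed the ones making the coherence diagrams commute. I would handle it using the explicit coend formula
\[
\tilde{X}(S)=\int^{n_+}S^{\wedge n}\wedge X(n_+),
\]
where $S^{\wedge n}$ denotes the pointed set $\operatorname{Hom}(n_+,S)$. From this formula, commutation with filtered colimits in $S$ is immediate, since finite powers commute with filtered colimits in $\mathbf{Set}$.
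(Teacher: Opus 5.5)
The paper gives no argument for this proposition; it simply defers to Lydakis and Connes--Consani, so there is no internal proof to match. Your proposal is a correct, self-contained version of the standard argument behind those references:
\begin{itemize}
\item prolongation $X\mapsto\tilde{X}$ is a fully faithful functor into $(\operatorname{End}(\mathbf{Set}_*),\circ,\operatorname{Id})$;
\item finitarity of $\tilde{X}\circ\tilde{Y}$ gives $\widetilde{X\circ Y}\cong\tilde{X}\circ\tilde{Y}$;
\item the strict monoidal structure on endofunctors is then transported back along this embedding.
\end{itemize}
Beyond what the citation provides, your route shows that $\tilde{(-)}$ is a strong monoidal full embedding. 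That is exactly what justifies the paper's next sentence, that monoids for $\circ$ are a class of monads on $\mathbf{Set}_*$.

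Three points to tighten.
\begin{itemize}
\item The essential image consists of the finitary functors $F$ with $F(*)\cong *$, not all finitary ones. A constant functor at a nontrivial pointed set is finitary but not of the form $\tilde{X}$, since $\tilde{X}(0_+)\cong X(0_+)=*$. This is harmless: reduced finitary functors still contain $\operatorname{Id}$ and are closed under composition, and you prove the key isomorphism directly anyway.
\item $\operatorname{Hom}(n_+,S)\cong S^{\times n}$ is a Cartesian power, so writing it as $S^{\wedge n}$ is misleading. Your filtered-colimit argument does correctly use Cartesian powers. Also, your smash-coend formula agrees with the paper's unenriched left Kan extension only because $X(0_+)=*$: a pair $(f,x)$ with $f$ constant factors through $0_+$ and is identified with the base point. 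This deserves a line.
\item Coherence needs no special \emph{compatibility} of the comparison isomorphisms. It suffices that $\phi_{X,Y}\colon\widetilde{X\circ Y}\cong\tilde{X}\circ\tilde{Y}$ is natural in $X$ and $Y$. Apply the faithful functor $\tilde{(-)}$ to the pentagon and triangle. The $\phi$'s and the unit comparison $\tilde{\mathbb{S}}\cong\operatorname{Id}$ then cancel by naturality, and both diagrams reduce to the trivially commuting ones in the strict monoidal category $\operatorname{End}(\mathbf{Set}_*)$.
\end{itemize}
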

Thus, the monoid objects in this monoidal category can be regarded as a certain class of monads on $\mathbf{Set}_*$.

\begin{definition}[\cite{CC}] Let $S$ be a rig. We associate a functor
\[\Sigma_{S}:\mathbf{Set}_*\to\mathbf{Set}_*\]
to it as follows.

For any $A\in\mathbf{Set}_*$, $\Sigma_{S}(A)$ is the set of the finite formal linear combination $\sum s_a a$ where $a\in A$ and $s_a\in S$ with $0a=0=s*$, where $*\in A$ is the base point.
One can define the addition on $\Sigma_{S}(A)$ induced by $s_aa+s'_aa=(s_a+s'_a)a$ and the base point to be $0$.

For any pointed map $f:A\to B$, we define \[\Sigma_{S}(f):\Sigma_{S}(A)\to\Sigma_{S}(B), \quad \Sigma_{S}(f)(\sum s_a a)=\sum s_a f(a).\]

For each $A\in\mathbf{Set}_*$, we define pointed maps
\[
\mu_A\colon
\Sigma_S(\Sigma_S(A))
\longrightarrow
\Sigma_S(A), \quad
\varepsilon_A\colon
A\longrightarrow
\Sigma_S(A)
\]
by
\[
\mu_A\left(
\sum_i s_i
\left(\sum_j s_{ij}a_{ij}\right)
\right)
=
\sum_{i,j}s_i s_{ij}a_{ij}, \quad
\varepsilon_A(a)=1a,
\]
respectively.
\end{definition}

\begin{rem} 
These give rise to a monad $(\Sigma_S, \mu, \varepsilon)$ on $\mathbf{Set}_*$, and by construction the functor $\Sigma$ from the category of rigs to that of monads on $\mathbf{Set}_*$ is faithful. 
In this sense, the product structures of monads are a generalization of those of rigs.
For a further development of this perspective, see \cite{Durov}, which studies a certain class of monads on $\mathbf{Set}$ as generalized rings.

Note that $\widetilde{HS}$ is isomorphic to $\Sigma_{S}$ for any rig $S$ as an endofunctor on $\mathbf{Set}_*$.
\end{rem}

\subsection{Lydakis' assembly map}
We define Lydakis' assembly map, which is a natural transformation connecting the two monoidal products, $\wedge$ and $\circ$, of $\Gamma$-sets defined above.
In other words, Lydakis' assembly map links these two generalizations of the multiplication of natural numbers.

Let $X, Y$ be $\Gamma$-sets.
We construct a map $X\wedge Y\to X\circ Y$ as follows \cite{Lydakis}.

First we consider the pointed map below for any pointed sets $A, B$
\[\tilde{X}(A)\wedge B\to \tilde{X}(A\wedge B), \alpha\wedge b\mapsto \tilde{X}(\delta_b)(\alpha),\]
where $\delta_b$ is the pointed map $A\to A\wedge B$ given by $\delta_b(a)=a\wedge b$.

In the same manner we have $A\wedge\tilde{Y}(B)\to \tilde{Y}(A\wedge B)$.

By composing the first one and the second one applied by $\tilde{X}$, we have
\[\tilde{X}(A)\wedge \tilde{Y}(B)\to \tilde{X}(A\wedge \tilde{Y}(B))\to \tilde{X}(\tilde{Y}(A\wedge B)).\]

Therefore for any map $k_+\wedge l_+\to n_+$ in $\Gamma^{op}$, we have
\[X(k_+)\wedge Y(l_+)\to (X\circ Y)(k_+\wedge l_+)\to (X\circ Y)(n_+).\]

By the universal property of Day convolution, we have the map
\[\operatorname{L}_{X, Y}:X\wedge Y\to X\circ Y,\]
which we call Lydakis' assembly map (for $X$ and $Y$).
Note that the smash product is symmetric but the substitution product is not.

\begin{prop}[\cite{CC}]The identity functor on $\Gamma \mathchar`-\mathbf{Set}$ with Lydakis' assembly maps gives a lax monoidal functor $(\Gamma \mathchar`-\mathbf{Set}, \circ, \mathbb{S}) \to (\Gamma \mathchar`-\mathbf{Set}, \wedge, \mathbb{S})$.
Therefore, via Lydakis' assembly maps, any monoid object in $(\Gamma \mathchar`-\mathbf{Set}, \circ, \mathbb{S})$ can be viewed as a monoid object in $(\Gamma \mathchar`-\mathbf{Set}, \wedge, \mathbb{S})$ as well.
\end{prop}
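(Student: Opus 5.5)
Because the identity functor is already given and the maps $\operatorname{L}_{X,Y}$ are already constructed, proving the proposition reduces to two things. First, show that $\operatorname{L}$ is natural. Second, check the lax monoidal axioms, namely associativity and the two unit laws, using the identity $\mathbb{S}\to\mathbb{S}$ as unit comparison. The statement about monoids then follows formally, since lax monoidal functors carry monoid objects to monoid objects. Note the direction: the structure maps go $X\wedge Y\to X\circ Y$, so the identity is lax from $(\wedge)$ to $(\circ)$. I would read the stated direction in that sense, i.e.\ monoid objects transported along $\operatorname{L}$. The checks are the same in either reading.

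To make the checks tractable, I would first give a pointwise description of $\operatorname{L}_{X,Y}$. An element of $(X\wedge Y)(n_+)$ is a class $[g, x\wedge y]$ with $x\in X(k_+)$, $y\in Y(l_+)$ and $g:k_+\wedge l_+\to n_+$. Unwinding the construction, it is sent to the element of $\tilde X(Y(n_+))$ given by
\[
X(k_+)\ni x\ \mapsto\ \tilde X(\psi)(x),\qquad \psi:k_+\to Y(n_+),\ i\mapsto Y(g(i\wedge -))(y).
\]
Here $g(i\wedge -):l_+\to n_+$. Well-definedness on the Kan-extension classes, and naturality in $X$, $Y$ and $n_+$, then follow from functoriality of $X$ and $Y$.

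\textbf{Unit laws.} Use $\mathbb{S}\cong\Gamma^1$. Under the canonical isomorphisms $X\wedge\mathbb{S}\cong X$ (Remark above) and $X\circ\mathbb{S}=\tilde X\circ\mathbb{S}\cong X$, take an element $[g,x\wedge i]$. The formula gives $\tilde X(j\mapsto g(j\wedge i))(x)$, which is exactly the image of $[g,x\wedge i]$ under $X\wedge\mathbb{S}\cong X$. The other side uses $\tilde{\mathbb{S}}\cong \operatorname{Id}$ and is symmetric.

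\textbf{Associativity.} This is the main obstacle. We need commutativity of
\[
(X\wedge Y)\wedge Z\to (X\circ Y)\wedge Z\to (X\circ Y)\circ Z,
\qquad
X\wedge(Y\wedge Z)\to X\wedge(Y\circ Z)\to X\circ(Y\circ Z),
\]
compared via the associators. My plan is to reduce to representables. Every $\Gamma$-set is a colimit of $\Gamma^n$'s, $\wedge$ preserves colimits in each variable, and $\operatorname{L}$ is natural. Hence it suffices to test on generators $[g,x\wedge y\wedge z]$, and then the pointwise formula applies. Both composites send such a generator to
\[
\tilde X\bigl(i\mapsto \tilde Y(j\mapsto Z(g(i\wedge j\wedge -))(z))(y)\bigr)(x)\in\tilde X\tilde Y Z(n_+).
\]
The only subtle point is that $\tilde X$ applied to a map into $\tilde Y(Z(n_+))$ must be computed on the Kan extension. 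Concretely, one factors through a finite stage $Y(m_+)$ and uses the fact that $\tilde X(A)$ is a colimit over $k_+\to A$. I expect this bookkeeping, i.e.\ showing that the two iterated assembly maps $\tilde X(A)\wedge\tilde Y(B)\wedge \tilde Z(C)\to\tilde X\tilde Y\tilde Z(A\wedge B\wedge C)$ agree, to be the hardest step. I would first prove it as a strength-compatibility statement for the maps $\tilde X(A)\wedge B\to \tilde X(A\wedge B)$ themselves, namely
\[
\tilde X(A)\wedge B\wedge C\to\tilde X(A\wedge B)\wedge C\to \tilde X(A\wedge B\wedge C)
\]
equals the direct map, and also prove their naturality in $B$. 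Associativity then follows formally, as it does for strong functors.
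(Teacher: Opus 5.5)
Your pointwise computations are correct, but you have the variance backwards, and that error sits exactly in the ``therefore'' clause.

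For a lax monoidal functor $F\colon(\mathcal{C},\otimes_{\mathcal{C}})\to(\mathcal{D},\otimes_{\mathcal{D}})$ the structure maps are $FX\otimes_{\mathcal{D}}FY\to F(X\otimes_{\mathcal{C}}Y)$. Take $F$ the identity, $\otimes_{\mathcal{C}}=\circ$ and $\otimes_{\mathcal{D}}=\wedge$. Then these are maps $X\wedge Y\to X\circ Y$, i.e.\ exactly $\operatorname{L}_{X,Y}$. So the proposition is correctly oriented as written, and your claim that the identity is lax from $(\wedge)$ to $(\circ)$ is false: in that direction the same data is an \emph{oplax} structure.

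This is why ``the checks are the same''. Writing $\cdot$ for composition, naturality of $\operatorname{L}$ gives $\operatorname{L}_{X\circ Y,Z}\cdot(\operatorname{L}_{X,Y}\wedge\operatorname{id}_Z)=(\operatorname{L}_{X,Y}\circ\operatorname{id}_Z)\cdot\operatorname{L}_{X\wedge Y,Z}$. In fact the associativity hexagon and unit checks you actually carry out are precisely the lax axioms for $(\circ)\to(\wedge)$.

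The formal consequence, however, depends on variance. In your stated orientation, ``lax functors preserve monoids'' would turn $\wedge$-monoids into $\circ$-monoids. That needs maps $M\circ M\to M\wedge M$, which $\operatorname{L}$ does not supply; it is not invertible in general, e.g.\ for $H\mathbb{B}$. The correct deduction is that a $\circ$-monoid $(M,\mu,\eta)$ becomes the $\wedge$-monoid $(M,\mu\cdot\operatorname{L}_{M,M},\eta)$. Its associativity and unit laws are exactly the axioms you verify.

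With that fixed, your argument is a complete direct proof; the paper gives none and only cites \cite{CC}. Specifically:
\begin{itemize}
\item Your formula $[g,x\wedge y]\mapsto\tilde X\bigl(i\mapsto Y(g(i\wedge -))(y)\bigr)(x)$ is what the paper's construction unwinds to, and it respects the coend relations.
\item The unit computations are correct.
\item Both associativity composites do equal $\tilde X\bigl(i\mapsto\tilde Y(j\mapsto Z(h(i\wedge j\wedge -))(z))(y)\bigr)(x)$.
\end{itemize}

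The step you expect to be hardest is short. Every element of $((X\wedge Y)\wedge Z)(n_+)$ already has the form $[h,[\operatorname{id},x\wedge y]\wedge z]$, so no reduction to representables is needed. The computation then uses only functoriality of $\tilde X,\tilde Y$ plus the identification $\widetilde{X\circ Y}\cong\tilde X\tilde Y$ underlying the associator of $\circ$. You should state that identification explicitly, since $\operatorname{L}_{X\circ Y,Z}$ is defined via $\widetilde{X\circ Y}$.

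If you go through strengths instead, you also need the left and right strengths of $\tilde Y$ to commute. This is because $\operatorname{L}_{X,Y}$ uses the left strength of $\tilde Y$ while $\operatorname{L}_{Y,Z}$ uses its right strength. In $\mathbf{Set}_*$ this compatibility is immediate.
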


Connes and Consani studied the assembly maps for $\Gamma$-sets of Eilenberg-Mac Lane type in detail.
\begin{prop}[\cite{CC}]For any rig $S$, the assembly map $\operatorname{L}_{HS, HS}$ is surjective.
Moreover, the assembly map $\operatorname{L}_{H\mathbb{B}, H\mathbb{B}}$ is not injective, where $\mathbb{B}$ denotes the Boolean semifield.
\end{prop}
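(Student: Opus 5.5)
The plan is to compute both sides of $\operatorname{L}_{HS,HS}$ explicitly at each level $n_+$. Since $\widetilde{HS}\cong\Sigma_S$, the codomain is
\[(HS\circ HS)(n_+)=\Sigma_S(S^{n}),\]
the set of finite formal combinations $\sum_r s_r v_r$ with $s_r\in S$ and $v_r\in S^n$. By the coend description of the Day convolution, every element of $(HS\wedge HS)(n_+)$ is represented by a class $[f,a\wedge b]$ with $a\in S^k$, $b\in S^l$ and $f\colon k_+\wedge l_+\to n_+$.

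The first step is to unwind the definition of the assembly map on such a class. For $i=1,\dots,k$ let $f_i\colon l_+\to n_+$ be $j\mapsto f(i,j)$. The map $\tilde X(A)\wedge B\to\tilde X(A\wedge B)$ applied to $a\in S^k=\widetilde{HS}(k_+)$ gives $\sum_i a_i\,(i\wedge b)$. Pushing forward along $f$ should then give
\[\operatorname{L}([f,a\wedge b])=\sum_{i=1}^{k} a_i\, HS(f_i)(b)\in\Sigma_S(S^n).\]
I will verify this formula carefully, since everything else rests on it.

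Surjectivity then follows by a block construction. Take an arbitrary element $\sum_{r=1}^{k}s_r v_r$ with $v_r\in S^n$. Set $a=(s_1,\dots,s_k)\in S^k$, and let $b\in S^{kn}$ be the concatenation of $v_1,\dots,v_k$, indexing $l_+=(kn)_+$ by pairs $(r,m)$. Define $f(i,(r,m))=m$ if $i=r$, and $0$ otherwise. Then $f_i$ picks out the $i$-th block, so $HS(f_i)(b)=v_i$, and the formula gives exactly $\sum_r s_r v_r$.

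For non-injectivity with $S=\mathbb{B}$ it suffices to work at level $1_+$, where $\Sigma_{\mathbb{B}}(\mathbb{B})=\{0,1\cdot 1\}$. I take two classes that both map to $1\cdot1$:
\begin{itemize}
\item $x=[\operatorname{id},1\wedge 1]$ with $k=l=1$;
\item $y=[\Delta,(1,1)\wedge(1,1)]$ with $k=l=2$, where $\Delta(i,j)=1$ if $i=j$ and $0$ otherwise.
\end{itemize}
The formula gives $\operatorname{L}(y)=1\cdot1+1\cdot1=1\cdot1$ in $\Sigma_{\mathbb{B}}(\mathbb{B})$, using $1+1=1$ in $\mathbb{B}$.

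The main obstacle is proving $x\neq y$. Here one cannot map to an Eilenberg--Mac Lane $\Gamma$-set: a biadditive map out of $\mathbb{B}\times\mathbb{B}$ lands in idempotents and factors through $\mathbb{B}\otimes_{\mathbb{N}}\mathbb{B}=\mathbb{B}$. Instead I will use a combinatorial invariant of representatives. To $[f,a\wedge b]$ at level $1_+$, attach the set
\[E=\{(i,j): a_i=b_j=1,\ f(i,j)=1\}\subseteq \operatorname{supp}a\times\operatorname{supp}b.\]
The invariant is whether $E$ is a product set $A\times B$ (counting the empty set as a product).

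The equivalence relation is generated by $[f'\circ(g\wedge\operatorname{id}),a\wedge b]\sim[f',HB(g)(a)\wedge b]$ and its analogue in the second variable. In the first case one checks that
\[E=\{(i,j)\in\operatorname{supp}a\times\operatorname{supp}b : (g(i),j)\in E'\},\]
and that $E'$ is the image $(g\times\operatorname{id})(E)$. From these two descriptions, if $E'=A'\times B'$ then $E=(\operatorname{supp}a\cap g^{-1}A')\times B'$. Conversely, if $E=A\times B$ then $E'=g(A)\times B$. The second variable is handled symmetrically. So being a product is preserved by the generating relations, hence is an invariant of the class.

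Finally, for $x$ we get $E=\{(1,1)\}$, which is a product. For $y$ we get $E=\{(1,1),(2,2)\}$, which is not. Hence $x\neq y$ while $\operatorname{L}(x)=\operatorname{L}(y)$, so $\operatorname{L}_{H\mathbb{B},H\mathbb{B}}$ is not injective.
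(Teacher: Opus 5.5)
Your proof is correct. The paper gives no argument of its own here, since the proposition is quoted from Connes--Consani \cite{CC}, so yours serves as a self-contained replacement for that citation.

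\textbf{The formula for $\operatorname{L}$.} The formula $\operatorname{L}([f,a\wedge b])=\sum_{i}a_i\,HS(f_i)(b)$ is right. After your first step, the second map sends $i\wedge b\in k_+\wedge S^l$ to $HS(\delta'_i)(b)$, where $\delta'_i(j)=i\wedge j$. Then $\Sigma_S(HS(f))$ turns this into $HS(f\circ\delta'_i)(b)=HS(f_i)(b)$. Equal vectors combine in $\Sigma_S(S^n)$, which is exactly what gives $\operatorname{L}(y)=(1+1)\cdot 1=1\cdot 1$. Write this second step out explicitly, since you flagged that everything rests on it.

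\textbf{Surjectivity.} Your block construction gives surjectivity at every level.

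\textbf{Non-injectivity.} Both identities $E=\{(i,j)\in\operatorname{supp}a\times\operatorname{supp}b:(g(i),j)\in E'\}$ and $E'=(g\times\operatorname{id})(E)$ hold. They rely on the feature of $\mathbb{B}$ that a finite sum equals $1$ if and only if some summand is $1$. This is where the Boolean semifield enters. It also fits your remark that no map to an Eilenberg--Mac Lane $\Gamma$-set can separate $x$ from $y$.

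When you write it up, add two small points:
\begin{enumerate}
\item The pointed coend collapses every triple with $f$ constant, $a=0$ or $b=0$ to the base point. All of these have $E=\emptyset$, so the invariant is well defined before quotienting.
\item A general coend relation along $g\wedge h$ factors as one along $g\wedge\operatorname{id}$ followed by one along $\operatorname{id}\wedge h$, because $f'\circ(g\wedge h)=(f'\circ(\operatorname{id}\wedge h))\circ(g\wedge\operatorname{id})$. This justifies checking only the one-variable relations.
\end{enumerate}

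Compared with the bare citation, your route gives an explicit witness of non-injectivity already at level $1_+$. It also gives a transparent combinatorial invariant of $(H\mathbb{B}\wedge H\mathbb{B})(1_+)$: whether the ``active'' set $E$ is a product set.
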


\begin{rem}Lydakis studied the constructions above in the category of {\it $\Gamma$-spaces} instead of $\Gamma$-sets.
In \cite[Proposition 5.23]{Lydakis}, it is shown that the assembly maps $F\wedge F'\to F\circ F'$ of $\Gamma$-spaces are equivalences from the viewpoint of stable homotopy theory.

In \cite{Masuda}, Masuda extended the smash product of spectra to categorical spectra. 
The resulting smash product is non-symmetric.
Since the smash product $\wedge$ of $\Gamma$-sets is symmetric, whereas the substitution product $\circ$ is non-symmetric (on the nose), it might therefore be natural to expect the latter to correspond to the smash product of (connective) categorical spectra. 
Accordingly, the smash product of $\Gamma$-sets would instead give rise to a different product structure on categorical spectra.
\end{rem}

\section{Linearly distributive category}

In this section, we recall some necessary notions about linearly distributive categories mainly from \cite{SS}.
\subsection{Linearly distributive category}

\begin{definition}[\cite{CS, SS}]
A linearly distributive category is a category $\mathcal{L}$, equipped with two monoidal structures $(\mathcal{L}, \otimes, \top)$ and $(\mathcal{L}, \triangleleft, \bot)$ and two natural transformations given by
\[\partial_{X, Y, Z}^{L}:X\otimes(Y\triangleleft Z)\to (X\otimes Y)\triangleleft Z\]
\[\partial_{X, Y, Z}^{R}:(X\triangleleft Y)\otimes Z\to X \triangleleft (Y\otimes Z)\]
called left linear distributor and right linear distributor respectively such that appropriate coherence conditions are satisfied.
\end{definition}

\begin{rem}
Linearly distributive categories were introduced as categorical semantics for the multiplicative fragment of linear logic in \cite{CS}. 
The monoidal products $\otimes$ and $\triangleleft$ correspond respectively to the multiplicative conjunction and multiplicative disjunction.
We do not assume the two monoidal structures are symmetric.
The product $\triangleleft$ may be written as $\parr$ or $\oplus$ in the literature.
We will not pursue the logical aspects further here.
\end{rem}

\begin{definition}[\cite{SS}]A mix linearly distributive category $(\mathcal{L}, \otimes, \triangleleft, \top, \bot, \operatorname{m})$ is a linearly distributive category $(\mathcal{L}, \otimes, \triangleleft, \top, \bot)$ with a morphism $\operatorname{m}:\bot\to\top$, called the mix map, such that for any $X, Y\in\mathcal{L}$ the diagram below commutes

\[
\xymatrix{
  X\otimes Y \ar[rr]^{\operatorname{id}\otimes (\operatorname{u}_{\triangleleft}^{L})^{-1}}\ar[d]_{(\operatorname{u}_{\triangleleft}^{R})^{-1}\otimes\operatorname{id}}&&X\otimes(\bot\triangleleft Y)\ar[rr]^{\operatorname{id}\otimes (\operatorname{m}\triangleleft\operatorname{id})}&&X\otimes(\top\triangleleft Y) \ar[d]^{\partial^{L}}\\
  (X\triangleleft\bot)\otimes Y \ar[d]_{\partial^{R}}&&&& (X\otimes\top)\triangleleft Y\ar[d]^{(\operatorname{u}_{\otimes}^{R})\otimes\operatorname{id}}\\
   X\triangleleft(\bot\otimes Y)\ar[rr]_{\operatorname{id}\otimes (\operatorname{m}\otimes\operatorname{id})}&& X\triangleleft(\top\otimes Y)\ar[rr]_{\operatorname{id}\otimes (\operatorname{u}_{\otimes}^{L})}& &X\triangleleft Y,
}\]
where $\operatorname{u}$'s are the unit isomorphisms.
The composite map $X\otimes Y\to X\triangleleft Y$ is called the mixor.

An isomix linearly distributive category is a mix linearly distributive category whose mix map is an isomorphism.

\end{definition}

\begin{rem}Every monoidal category can be viewed as an isomix linearly distributive category such that the two monoidal structures are the same, the linear distributors are the associators, and the mix map is the identity.
\end{rem}

\subsection{Normal duoidal category}
\begin{definition}[\cite{SS}]A duoidal category is a category $\mathcal{D}$ equipped with two monoidal structures $(\mathcal{D}, \otimes, \top)$, $(\mathcal{D}, \triangleleft, \bot)$, a natural transformation given by
\[\operatorname{int}_{X_1, X_2, X_3, X_4}:(X_1\triangleleft X_2)\otimes(X_3\triangleleft X_4)\to (X_1\otimes X_3)\triangleleft (X_2\otimes X_4),\]
called the interchange law, and two morphisms
\[\top\to\top\triangleleft\top, \quad \bot\otimes\bot\to\bot\]
such that $\triangleleft:\mathcal{D}\times\mathcal{D}\to\mathcal{D}$ and $\bot:*\to\mathcal{D}$ are $\otimes$-lax monoidal functors, where $*$ denotes the trivial category, and the associativity and unitor isomorphisms of $(\mathcal{D}, \triangleleft, \bot)$ are $\otimes$-monoidal.

A duoidal category is called a normal duoidal category if the map $\operatorname{int}_{\top, \bot, \bot, \top}$ is an isomorphism.

\end{definition}

\begin{rem}In any duoidal category $(\mathcal{D}, \otimes, \top, \triangleleft, \bot)$, there is a morphism
\[k:\top\cong\top\otimes\top\cong(\top\triangleleft\bot)\otimes(\bot\triangleleft\top)\xrightarrow{\operatorname{int}_{\top, \bot, \bot, \top}}(\top\otimes\bot)\triangleleft(\top\otimes\bot)\cong\bot\triangleleft\bot\cong\bot,\]
where the isomorphisms are induced by the unit isomorphisms.
In any normal duoidal category, $k$ is an isomorphism.
\end{rem}

\begin{thm}[\cite{SS, GF}]\label{thm:duoidal-isomix}
Every normal duoidal category gives rise to an isomix linearly distributive category.
\end{thm}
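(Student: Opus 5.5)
The plan is to build the linear distributors directly from the interchange law, using the isomorphism $k:\top\to\bot$ of the preceding remark to turn $\top$ into $\bot$ (and back) wherever needed.

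\textbf{Left distributor.} Define $\partial^{L}_{X,Y,Z}$ as the composite
\[X\otimes(Y\triangleleft Z)\cong (X\triangleleft\bot)\otimes(Y\triangleleft Z)\xrightarrow{\operatorname{int}_{X,\bot,Y,Z}}(X\otimes Y)\triangleleft(\bot\otimes Z)\xrightarrow{\operatorname{id}\triangleleft(k^{-1}\otimes\operatorname{id})}(X\otimes Y)\triangleleft(\top\otimes Z)\cong(X\otimes Y)\triangleleft Z.\]
The first isomorphism uses the $\triangleleft$-unitor; the last uses the $\otimes$-unitor. The second step is the only place normality is used: $k^{-1}$ exists because $\operatorname{int}_{\top,\bot,\bot,\top}$ is invertible.

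\textbf{Right distributor.} Symmetrically, define $\partial^{R}_{X,Y,Z}$ as
\[(X\triangleleft Y)\otimes Z\cong(X\triangleleft Y)\otimes(\bot\triangleleft Z)\xrightarrow{\operatorname{int}}(X\otimes\bot)\triangleleft(Y\otimes Z)\xrightarrow{(\operatorname{id}\otimes k^{-1})\triangleleft\operatorname{id}}(X\otimes\top)\triangleleft(Y\otimes Z)\cong X\triangleleft(Y\otimes Z).\]
Naturality of both maps follows from naturality of $\operatorname{int}$ and of the unitors.

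\textbf{Mix map.} Take $\operatorname{m}:=k^{-1}:\bot\to\top$, which is an isomorphism. With this choice the composite $X\otimes Y\to X\triangleleft Y$ along the top of the mix diagram reduces, after cancelling $k$ against $k^{-1}$, to
\[X\otimes Y\cong(X\triangleleft\bot)\otimes(\bot\triangleleft Y)\xrightarrow{\operatorname{int}}(X\otimes\bot)\triangleleft(\bot\otimes Y)\to X\triangleleft Y.\]
The bottom path reduces to the same expression. So both paths equal one canonical mixor, and the mix condition holds once the interchange law's unit compatibilities are checked.

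\textbf{Main obstacle: the coherence axioms.} The remaining work is to verify Cockett--Seely's coherence conditions: compatibility of $\partial^{L},\partial^{R}$ with the associators of $\otimes$ and $\triangleleft$, with the unitors, and with each other. Each axiom expands into a diagram built from $\operatorname{int}$, unitors and $k$. These should reduce to the duoidal axioms, namely that $\triangleleft$ is $\otimes$-lax monoidal (the associativity hexagon for $\operatorname{int}$ and its unit laws involving $\top\to\top\triangleleft\top$ and $\bot\otimes\bot\to\bot$), and that the $\triangleleft$-associator and unitors are $\otimes$-monoidal.

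The delicate point is that $\bot$ must act as a unit for $\otimes$ up to $k$. I would first prove a lemma that under normality the maps $\top\to\top\triangleleft\top$ and $\bot\otimes\bot\to\bot$ are identified with the unitors via $k$, so that $\bot\cong\top$ coherently. After transporting along $k$, the structure looks like a lax duoidal structure with a common unit, and the linear distributor axioms become instances of the lax monoidality of $\triangleleft$. Since this is the content of \cite{SS, GF}, I would follow their diagram chase, which is lengthy but routine once this lemma is in place.
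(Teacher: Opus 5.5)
Your proposal is correct and follows the same route as the paper. The paper defines $\partial^{L}$ and $\partial^{R}$ by exactly your composites, takes $\operatorname{m}$ to be $k^{-1}$ (its displayed composite for $\operatorname{m}$ is the inverse of $k$), and defers the coherence axioms to \cite{SS, GF}. One small correction: in the mix square there is no $k$ to cancel against $k^{-1}$; both paths reduce to $\operatorname{int}_{X,\bot,\bot,Y}$ followed by $(\operatorname{id}\otimes k^{-1})\triangleleft(k^{-1}\otimes\operatorname{id})$ and unitors, so the mix condition follows from naturality of $\operatorname{int}$ alone.
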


\begin{rem}
In \cite{SS}, Spivak and Srinivasan prove more than the theorem above.
They define morphisms of normal duoidal categories and linearly distributive categories, thereby obtaining the categories of normal duoidal categories and linearly distributive categories, respectively.
They then construct a faithful functor from the former to the latter.
\end{rem}

\begin{rem}For any normal duoidal category $(\mathcal{D}, \otimes, \top, \triangleleft, \bot)$, the associated mix map $\operatorname{m}$ is defined to be
\[\operatorname{m}:\bot\cong\bot\triangleleft\bot\cong(\top\otimes\bot)\triangleleft(\top\otimes\bot)\xrightarrow{\operatorname{int}^{-1}_{\top, \bot, \bot, \top}}(\top\triangleleft\bot)\otimes(\bot\triangleleft\top)\cong\top\otimes\top\cong\top,\]
where the unlabeled isomorphisms are induced by the unit isomorphisms.
Furthermore, the associated left distributor $\partial_{X, Y, Z}^{L}$ and the associated right distributor $\partial_{X, Y, Z}^{R}$ are defined to be the following respectively.

\[
\begin{aligned}
X\otimes(Y\triangleleft Z)
&\cong (X\triangleleft\bot)\otimes(Y\triangleleft Z) \\
&\xrightarrow{\operatorname{int}_{X,\bot,Y,Z}}
(X\otimes Y)\triangleleft(\bot\otimes Z) \\
&\xrightarrow{(\operatorname{id}_{X}\otimes\operatorname{id}_{Y})
\triangleleft(k^{-1}\otimes\operatorname{id}_{Z})}
(X\otimes Y)\triangleleft(\top\otimes Z) \\
&\cong (X\otimes Y)\triangleleft Z,
\end{aligned}
\]
\[
\begin{aligned}
(X\triangleleft Y)\otimes Z
&\cong (X\triangleleft Y)\otimes(\bot\triangleleft Z) \\
&\xrightarrow{\operatorname{int}_{X, Y, \bot, Z}}
(X\otimes \bot)\triangleleft(Y\otimes Z) \\
&\xrightarrow{(\operatorname{id}_{X}\otimes k^{-1})
\triangleleft(\operatorname{id}_{Y}\otimes\operatorname{id}_{Z})}
(X\otimes \top)\triangleleft(Y\otimes Z) \\
&\cong X\triangleleft (Y\otimes Z),
\end{aligned}
\]
where the unlabeled isomorphisms are induced by the unit isomorphisms.
\end{rem}

\section{Main theorem}
We have the following, which is essentially the same as \cite[Proposition 36]{GF}.
\begin{prop}$(\Gamma \mathchar`-\mathbf{Set}, \wedge, \circ,  \mathbb{S})$ gives rise to a normal duoidal category.
\end{prop}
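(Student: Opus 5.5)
We take $\otimes=\wedge$ with unit $\top=\mathbb{S}$, and $\triangleleft=\circ$ with unit $\bot=\mathbb{S}$. Both monoidal structures are already available from the earlier Propositions: $(\Gamma\mathchar`-\mathbf{Set},\wedge,\mathbb{S})$ is symmetric monoidal and $(\Gamma\mathchar`-\mathbf{Set},\circ,\mathbb{S})$ is monoidal. What remains is to build the interchange law
\[\operatorname{int}_{X_1,X_2,X_3,X_4}:(X_1\circ X_2)\wedge(X_3\circ X_4)\to(X_1\wedge X_3)\circ(X_2\wedge X_4)\]
and the two unit maps, check the duoidal axioms, and verify normality.

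\textbf{Interchange law.} By the universal property of Day convolution, a map out of $(X_1\circ X_2)\wedge(X_3\circ X_4)$ is the same as a family of maps
\[\tilde{X}_1(X_2(k_+))\wedge\tilde{X}_3(X_4(l_+))\to(X_1\wedge X_3)\circ(X_2\wedge X_4)(n_+),\]
natural in $k_+,l_+$ and in $k_+\wedge l_+\to n_+$. We produce these in three steps.
\begin{enumerate}
\item Use the canonical map $\tilde{X}_1(A)\wedge\tilde{X}_3(B)\to(\tilde{X}_1\wedge\tilde{X}_3)(A\wedge B)\cong\widetilde{X_1\wedge X_3}(A\wedge B)$, which comes from the Day-convolution (left Kan extension) description of $\tilde{X}_1\wedge\tilde{X}_3$ together with the earlier Remark.
\item Apply $\widetilde{X_1\wedge X_3}$ to the universal map $X_2(k_+)\wedge X_4(l_+)\to(X_2\wedge X_4)(k_+\wedge l_+)$.
\item Push forward along $k_+\wedge l_+\to n_+$.
\end{enumerate}
The resulting map is simply the lax monoidal structure of $X\mapsto\tilde{X}$, viewed as a functor from $\Gamma\mathchar`-\mathbf{Set}$ into endofunctors under the pointwise Day product, composed with substitution. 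This is the discrete analogue of \cite[Proposition 36]{GF}, and we would follow that construction.

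\textbf{Unit maps and axioms.} Since both units are $\mathbb{S}$, we take $\top\to\top\circ\top$ and $\bot\wedge\bot\to\bot$ to be the canonical isomorphisms $\mathbb{S}\cong\mathbb{S}\circ\mathbb{S}$ and $\mathbb{S}\wedge\mathbb{S}\cong\mathbb{S}$. The duoidal axioms then split into three groups:
\begin{itemize}
\item associativity of $\operatorname{int}$ with respect to $\wedge$ (lax monoidality of $\circ$);
\item compatibility of $\operatorname{int}$ with the associator of $\circ$;
\item the unit conditions.
\end{itemize}
Because every $\Gamma$-set is a colimit of representables $\Gamma^n$, and all constructions involved preserve colimits in each variable (for $\circ$ only in the first variable, so care is needed), we check everything on elements. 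Concretely, an element of $\tilde{X}(A)$ is represented as $[x,\phi]$ with $x\in X(m_+)$ and $\phi:m_+\to A$. On such representatives $\operatorname{int}$ is
\[[x_1,\phi]\wedge[x_3,\psi]\mapsto[x_1\wedge x_3,\phi\wedge\psi],\]
and each axiom reduces to an equality of such formulas, which holds by naturality and the coherence of the smash product of pointed sets.

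\textbf{Normality and the main obstacle.} For normality, $\operatorname{int}_{\mathbb{S},\mathbb{S},\mathbb{S},\mathbb{S}}$ is a map $(\mathbb{S}\circ\mathbb{S})\wedge(\mathbb{S}\circ\mathbb{S})\to(\mathbb{S}\wedge\mathbb{S})\circ(\mathbb{S}\wedge\mathbb{S})$ between objects that are all isomorphic to $\mathbb{S}$. Using $\tilde{\mathbb{S}}\cong\operatorname{Id}$, it is a map $\mathbb{S}\to\mathbb{S}$ sending $i\mapsto i$, hence an isomorphism. The step I expect to be the main obstacle is the well-definedness and naturality of $\operatorname{int}$ in the second and fourth variables, since $\circ$ is not cocontinuous there. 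To handle this, I would construct $\operatorname{int}$ directly on the coend representatives described above, and check compatibility with the coend relations in each variable separately, rather than relying on colimit arguments.
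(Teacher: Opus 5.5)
Your proposal is correct and follows the paper's proof. The interchange law is built exactly as there: the canonical map $\tilde{X}_1(A)\wedge\tilde{X}_3(B)\to(\tilde{X}_1\wedge\tilde{X}_3)(A\wedge B)\cong\widetilde{X_1\wedge X_3}(A\wedge B)$, then $\widetilde{X_1\wedge X_3}$ applied to the universal map $X_2(k_+)\wedge X_4(l_+)\to(X_2\wedge X_4)(k_+\wedge l_+)$, then the Day convolution universal property, modelled on \cite[Proposition 36]{GF}. You also go further than the paper by fixing the unit maps, sketching the axiom checks and verifying normality of $\operatorname{int}_{\mathbb{S},\mathbb{S},\mathbb{S},\mathbb{S}}$; your anticipated obstacle is milder than you expect, because each step is already natural in $X_2,X_4$ and in $(k_+,l_+)$, so the universal properties give well-definedness without working on coend representatives.
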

\begin{proof}It is enough to construct the interchange law 
\[(X_1\circ X_2)\wedge(X_3\circ X_4)\to (X_1\wedge X_3)\circ (X_2\wedge X_4).\]
For any $k_+, l_+\in\Gamma$, we have the canonical map 
\[\tilde{X_1}(X_2(k_+))\wedge\tilde{X_3}(X_4(l_+))\to (\tilde{X_1}\wedge\tilde{X_3})(X_2(k_+)\wedge X_4(l_+))\]
by the universal property of the left Kan extension.
Note that $\tilde{X_1}\wedge\tilde{X_3}$ is canonically isomorphic to $\widetilde{X_1\wedge X_3}$.
Again by the universal property we have 
\[(\tilde{X_1}\wedge\tilde{X_3})(X_2(k_+)\wedge X_4(l_+))\to(\tilde{X_1}\wedge\tilde{X_3})((X_2\wedge X_4)(k_+\wedge l_+)).\]

The composite of these maps, together with the universal property of the smash product, gives the interchange law.
\end{proof}

\begin{rem}Note that in the normal duoidal category $(\Gamma \mathchar`-\mathbf{Set}, \wedge, \circ,  \mathbb{S})$, the interchange law $\operatorname{int}_{X, \mathbb{S}, \mathbb{S}, Y}$ is isomorphic to the assembly map $\operatorname{L}_{X, Y}$.

Moreover, in the associated isomix linearly distributive category, the linear distributors $\partial^{L}_{X, \mathbb{S}, Y}$ and $\partial^{R}_{X, \mathbb{S}, Y}$ are isomorphic to the interchange law $\operatorname{int}_{X, \mathbb{S}, \mathbb{S}, Y}$.
\end{rem}

By Theorem \ref{thm:duoidal-isomix}, we have the following.

\begin{thm}The quadruple $(\Gamma \mathchar`-\mathbf{Set}, \wedge, \circ,  \mathbb{S})$ gives rise to an isomix linearly distributive category, where the mixors are isomorphic to Lydakis' assembly maps. 
\end{thm}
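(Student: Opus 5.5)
The plan has two parts: obtain the isomix linearly distributive structure from the duoidal one, and then identify the mixor with the assembly map. For the first part, the preceding Proposition shows that $(\Gamma \mathchar`-\mathbf{Set}, \wedge, \circ, \mathbb{S})$ is a normal duoidal category. Here $\otimes=\wedge$, $\triangleleft=\circ$, and $\top=\bot=\mathbb{S}$. Theorem \ref{thm:duoidal-isomix} then immediately gives an isomix linearly distributive category, with mix map $\operatorname{m}$ and distributors $\partial^{L},\partial^{R}$ as in the remark following that theorem. So the real content is the second claim, that the mixor $X\wedge Y\to X\circ Y$ is isomorphic to $\operatorname{L}_{X,Y}$.

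For the identification, I will first simplify $k$ and $\operatorname{m}$. Since $\top=\bot=\mathbb{S}$, the map $k$ is built from $\operatorname{int}_{\mathbb{S},\mathbb{S},\mathbb{S},\mathbb{S}}$ and unit isomorphisms. By the earlier remark, this interchange map is isomorphic to $\operatorname{L}_{\mathbb{S},\mathbb{S}}:\mathbb{S}\wedge\mathbb{S}\to\mathbb{S}\circ\mathbb{S}$. Both sides are canonically $\mathbb{S}$, since $\tilde{\mathbb{S}}\cong\operatorname{Id}$, and unwinding the definition of $\operatorname{L}$ on $1_+\wedge 1_+$ shows it sends $1\wedge 1$ to $1$. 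Hence $k$, and therefore $\operatorname{m}=k^{-1}$ up to the unit isomorphisms, is the canonical identity of $\mathbb{S}$.

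Next I expand the mixor. With $\operatorname{m}$ trivial, the top route of the mix diagram reduces to
\[X\wedge Y\cong X\wedge(\mathbb{S}\circ Y)\xrightarrow{\partial^{L}_{X,\mathbb{S},Y}}(X\wedge\mathbb{S})\circ Y\cong X\circ Y.\]
By the second part of the preceding remark, $\partial^{L}_{X,\mathbb{S},Y}$ is isomorphic to $\operatorname{int}_{X,\mathbb{S},\mathbb{S},Y}$, and that is isomorphic to $\operatorname{L}_{X,Y}$. The bottom route, through $\partial^{R}$, gives the same answer by symmetry, which is consistent with the commutativity of the mix diagram guaranteed by Theorem \ref{thm:duoidal-isomix}.

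The main obstacle is making sure these ``isomorphic to'' statements are compatible with the specific unit isomorphisms in the definitions. Concretely, $\operatorname{int}_{X,\mathbb{S},\mathbb{S},Y}$ should agree with $\operatorname{L}_{X,Y}$ after the identifications $X\cong X\circ\mathbb{S}$, $Y\cong\mathbb{S}\circ Y$, $X\wedge\mathbb{S}\cong X$ and $\mathbb{S}\wedge Y\cong Y$, not merely up to some arbitrary isomorphism. To check this, I will work at a generator $x\wedge y\in X(k_+)\wedge Y(l_+)$ and use the universal property of Day convolution, so that it suffices to compare values there.
\begin{itemize}
\item The interchange map sends $x\wedge y$ to the image of $x\wedge y$ under $\tilde{X}(k_+)\wedge\tilde{Y}(l_+)\to\widetilde{X\wedge Y}$-type maps, evaluated using the identification $Y\cong\mathbb{S}\circ Y$.
\item The assembly map sends $x\wedge y$ to $\tilde{X}(\delta_{\tilde{Y}})(x)$ composed with the second assembly map.
\end{itemize}
Both reduce, via the Yoneda-type description of $\tilde{X}$ on $k_+$, to the element $[x, (i\mapsto i\wedge y)]\in\tilde{X}(k_+\wedge Y(l_+))$. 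If this direct comparison becomes unwieldy, I will fall back on the representables: I will check agreement for $X=\Gamma^m$ and $Y=\Gamma^n$, where everything is explicit via $\Gamma^m\wedge\Gamma^n\cong\Gamma^{mn}$, and then extend by naturality and the fact that both functors preserve colimits in each variable separately.
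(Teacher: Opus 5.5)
Your proposal is correct and follows the paper's route: the preceding Proposition plus Theorem~\ref{thm:duoidal-isomix} give the isomix structure, and the mixor reduces to $\partial^{L}_{X,\mathbb{S},Y}$, which the paper's remark identifies with $\operatorname{int}_{X,\mathbb{S},\mathbb{S},Y}\cong\operatorname{L}_{X,Y}$. Your extra checks make explicit what the paper leaves implicit: that $k$ and $\operatorname{m}$ are the identity of $\mathbb{S}$ (immediate anyway, since $\mathbb{S}\cong\Gamma^{1}$ has only the identity as an automorphism), and that the identifications respect the unit isomorphisms.
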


In this semantics, the multiplicative conjunction is represented by the smash product $\wedge$, the multiplicative disjunction is represented by the substitution product $\circ$, and both $\bot$ and $\top$ are represented by the sphere spectrum $\mathbb{S}$.

The results established above should also hold for $\Gamma$-spaces.
Thus, they would provide a geometric categorical semantics of the multiplicative linear logic.
It may be worth noting that, from the viewpoint of stable homotopy theory, the assembly maps become equivalences.


\begin{thebibliography}{99}


\bibitem{BF}
Bousfield, A. K.; Friedlander, E. M.
Homotopy theory of $\Gamma$-spaces, spectra, and bisimplicial sets. Geom. Appl. Homotopy Theory, II, Proc. Conf., Evanston 1977, Lect. Notes Math. 658, 80--130 (1978).

\bibitem{CS}
Cockett, J. R. B.; Seely, R. A. G.
Weakly distributive categories. J. Pure Appl. Algebra 114, No. 2, 133--173 (1997).


\bibitem{CC}
Connes, Alain; Consani, Caterina
Segal’s gamma rings and universal arithmetic. Q. J. Math. 72, No. 1-2, 1--29 (2021).

\bibitem{DGM}
Dundas, Bj{\o}rn Ian; Goodwillie, Thomas G.; McCarthy, Randy
The local structure of algebraic $K$-theory.
Algebra and Applications 18. London: Springer (ISBN 978-1-4471-4392-5/hbk; 978-1-4471-4393-2/ebook). xv, 435 p. (2013).

\bibitem{Durov}
Durov, Nikolai
New approach to Arakelov geometry. 
PhD thesis, Univ. Bonn, 2007.

\bibitem{GF}
Garner, Richard; L{\'o}pez Franco, Ignacio
Commutativity. J. Pure Appl. Algebra 220, No. 5, 1707--1751 (2016).

\bibitem{Girard}
Girard, Jean-Yves
Linear logic. Theor. Comput. Sci. 50, 1--102 (1987).

\bibitem{Lydakis}
Lydakis, Manos
Smash products and $\Gamma$-spaces. Math. Proc. Camb. Philos. Soc. 126, No. 2, 311--328 (1999).


\bibitem{Masuda} Masuda, Naruki
The algebra of categorical spectra, 
PhD thesis, Johns Hopkins University, 2024.



\bibitem{Segal}
Segal, Graeme
Categories and cohomology theories. Topology 13, 293--312 (1974).

\bibitem{SS}
Spivak, David I.; Srinivasan, Priyaa Varshinee
What kind of linearly distributive category do polynomial functors form? Theory Appl. Categ. 45, 1748--1781 (2026).




\end{thebibliography}
\end{document}